\newtheorem{thm}{Theorem}[section]
\newtheorem{conj}[thm]{Conjecture}
\theoremstyle{remark}
\newtheorem{remark}[thm]{Remark}
\theoremstyle{definition}
\renewcommand*\env@matrix[1][*\c@MaxMatrixCols c]{%
  \hskip -\arraycolsep
  \let\@ifnextchar\new@ifnextchar
  \array{#1}}
\newcommand{\ord}{\operatorname{ord}}
\def\qq{\mathbb{Q}}
\def\rr{\mathbb{R}}
\def\zz{\mathbb{Z}}
\def\cc{\mathbb{C}}
\def\mm{\mathcal{M}}
\def\tt{\mathcal{T}}
\def\oo{\mathcal{O}}
\def\ii{\mathcal{I}}
\def\HH{\mathbb{H}}
\def\Im{\mathrm{Im}\,}
\def\Ar{\mathrm{Ar}}
\def\d{\mathrm{d}}
\def\Xbar{\bar{X}}
\def\Ybar{\bar{Y}}
\def\can{\mathrm{can}}
\def\vareps{\varepsilon}
\def\eps{\epsilon}
\def\tr{\mathrm{tr}}
\numberwithin{equation}{section}
\begin{document}

\title[Point-like limit of the hyperelliptic Zhang-Kawazumi invariant]{Point-like limit of the hyperelliptic Zhang-Kawazumi invariant}

\author{Robin de Jong}

\begin{abstract} The behavior near the boundary in the Deligne-Mumford compactification of many functions on $\mm_{h,n}$ can be conveniently expressed using the notion of ``point-like limit'' that we adopt from the string theory literature. In this note we study a function on $\mm_h$ that has been introduced by N.~Kawazumi and S.~Zhang, independently. We show that the point-like limit of the Zhang-Kawazumi invariant in a family of hyperelliptic Riemann surfaces in the direction of any hyperelliptic stable curve exists, and is given by evaluating a combinatorial analogue of the Zhang-Kawazumi invariant, also introduced by Zhang, on the dual graph of that stable curve.  
\end{abstract}

\maketitle

\thispagestyle{empty}

\section{Introduction}

The invariant from the title has been introduced around 2008   by N. Kawazumi \cite{kawhandbook} \cite{kaw} and S. Zhang \cite{zhgs}, independently, and can be defined as
\begin{equation} \label{definephi} \varphi(\Sigma) = \int_{\Sigma \times \Sigma } g_\Ar(x,y) \,  \nu^2(x,y) \, . 
\end{equation}
Here $\Sigma$ is a compact connected Riemann surface of positive genus, and $g_\Ar$ is the Arakelov-Green's function \cite{ar} on $\Sigma \times \Sigma$. Let $\Delta$ be the diagonal divisor on $\Sigma \times \Sigma$. Then $\nu \in A^{1,1}(\Sigma \times \Sigma)$ is the curvature form of the hermitian line bundle $\oo_{\Sigma \times \Sigma}(\Delta)$ whose metric is given by the prescription
\[ \log\|1\|(x,y) = g_\Ar(x,y)  \] 
for all $(x,y) \in \Sigma \times \Sigma$ away from $\Delta$.

In \cite{kawhandbook} \cite{kaw} the invariant $\varphi$ arises in the context of a study of the first extended Johnson homomorphism \cite{mo} on the mapping class group of a pointed compact connected oriented topological surface. The results from (the unfortunately unpublished) \cite{kaw} were revisited in \cite{djtorus}. The motivation in \cite{zhgs} to study $\varphi$ comes from number theory, where $\varphi$ appears as a local archimedean contribution in a formula that relates the height of the canonical Gross-Schoen cycle on a smooth projective and geometrically connected curve with semistable reduction over a number field with the self-intersection of its admissible relative dualizing sheaf. The connection between these two seemingly different approaches was established in \cite{djnormal}. 

The Zhang-Kawazumi invariant vanishes identically in genus $h=1$. In genera $h \geq 2$, the invariant is strictly positive, cf. \cite[Corollary 1.2]{kaw} or \cite[Proposition~2.5.1]{zhgs}. Several expressions for the Levi form of $\varphi$ are derived in \cite{djtorus} \cite{kaw}. 

The invariant in genus $h=2$ has recently attracted attention from superstring theory \cite{dhgr} \cite{dhgrpr} where its integral against the volume form $\d  \mu_2$ of the Siegel metric over $\mm_2$ appears in the low energy expansion of the two-loop four-graviton amplitude. The detailed study of the invariant in \cite{dhgr} \cite{dhgrpr} has yielded the important result that $\varphi$ is an eigenfunction of the Laplace-Beltrami operator with respect to $\d \mu_2$, with eigenvalue $5$ (cf. \cite[Section~4.6]{dhgrpr}). In \cite{pio}, based on this result a completely explicit expression is given for the genus-two Zhang-Kawazumi invariant as a theta lift, involving a Siegel-Narain theta series and a weight $-5/2$ vector-valued modular form appearing in the theta series decomposition of the weak Jacobi form $\vartheta_1^2(\tau,z)/\eta(\tau)^6$.

One of the main starting points in deriving these results is a study of the low order asymptotics of the Zhang-Kawazumi invariant near the boundary of $\mm_2$ in the Deligne-Mumford compactification $\overline{\mm}_2$. As it turns out, there is a natural combinatorial description of these asymptotics in terms of the tautological stratification of $\overline{\mm}_2$ by topological type of the dual graphs of stable curves. The following theorem summarizes the results from \cite{dhgr} \cite{dhgrpr} about the asymptotics of $\varphi$ in genus $h=2$. We refer to Section \ref{reviewgenustwo} for an explanation of the notation and terminology.  
\begin{thm} \label{genustwo} Let $\Xbar_0$ be a complex stable curve of arithmetic genus two. Let $(G,q)$ be its dual graph and let $\{e_1,\ldots,e_r\}$ denote the edge set of $G$. Let $0 \in U \subset \cc^{3}$ be the universal deformation space of $\bar{X}_0$ as an analytic stable curve, let $\pi \colon \Xbar \to U$ be the associated Kuranishi family where $\bar{X}_0$ is the fiber of $\pi$ at $0$ and suppose that $u_1\cdots u_r$ is an equation for the (reduced normal crossings) divisor $D$ in $U$ given by the points $u \in U$ such that $\Xbar_u$ is not smooth. We assume that the choice of coordinates in $U \subset \cc^3$ is compatible with the given ordering of the edges of $G$. Let $\varphi^\tr(G,q) \in \qq(x_1,\ldots,x_r)$ be the rational function given in Table \ref{phiinv}. 
\begin{table} \label{phiinv}
\caption{Point-like limit of the ZK invariant in genus two}
\begin{center}
\begin{tabular}{|l|l|}
\hline
$(G,q)$ & $\varphi^{\tr}(G,q)$  \\  [2pt]
\hline
$I(e_1,e_2,e_3)$  & $\frac{1}{12}(x_1+x_2+x_3) - \frac{5}{12}\frac{x_1x_2x_3}{x_1x_2+x_2x_3+x_3x_1}$ \\ [6pt]
$II(e_1)$  & $x_1$ \\ [4pt]
$III(e_1)$  & $\frac{1}{12}x_1$ \\ [4pt]
$IV(e_1,e_2)$  & $x_1 + \frac{1}{12}x_2$ \\ [4pt]
$V(e_1,e_2)$  & $\frac{1}{12}(x_1  + x_2)$ \\ [4pt]
$VI(e_1,e_2,e_3)$  & $x_1 + \frac{1}{12}(x_2 + x_3)$   \\ [4pt]
\hline
\end{tabular}
\end{center}
\end{table}
Then the asymptotics
\begin{equation} \label{asymptgenustwo} \varphi(\Xbar_u) = \varphi^\tr(G,q,-\log|u_1|,\ldots,-\log|u_r|) + O(1) 
\end{equation}
holds as $u \to 0$ over $U \setminus D$.
\end{thm}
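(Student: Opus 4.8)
The plan is to reduce the statement to two uniform asymptotic inputs about the Arakelov data of a pinching family and then read off the rational functions in Table \ref{phiinv} by an elementary computation on graphs. First I would go through the six topological types $I$--$VI$ of complex stable curves of arithmetic genus two --- equivalently, the six polarized graphs $(G,q)$ with $b_1(G)+\sum_v q(v)=2$ --- because each admits a transparent plumbing model for its Kuranishi family: away from the $r$ vanishing cycles $\Xbar_u$ is glued from the thick parts of the normalized components of $\Xbar_0$ (with small disks removed) by inserting, near the $i$-th node, a thin annulus of conformal modulus growing like $-\log\abs{u_i}$ via a local equation $zw=u_i$. The reduction map $\mathrm{red}$ then carries $\Xbar_u$ onto the polarized metrized graph with underlying graph $G$, polarization $q$, and edge lengths $x_i=-\log\abs{u_i}$, all of which tend to $+\infty$ as $u\to 0$ over $U\setminus D$.

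The core of the argument, and where the detailed genus-two analysis of \cite{dhgr} and \cite{dhgrpr} enters through the closed-form behaviour of the period matrix and of the Riemann theta function near the boundary of $\mm_2$, consists of two estimates. Input (i): uniformly for $x,y\in\Xbar_u$,
\[ g_\Ar^{(u)}(x,y)=g_{(G,q)}\bigl(\mathrm{red}(x),\mathrm{red}(y);\,-\log\abs{u_1},\ldots,-\log\abs{u_r}\bigr)+O(1)\,, \]
where $g_{(G,q)}$ is Zhang's admissible Green's function of the polarized metrized graph with the indicated edge lengths. Input (ii): the pushforward $\mathrm{red}_*\mu_u$ of the normalized Arakelov $(1,1)$-form differs from Zhang's admissible measure $\mu_{(G,q)}$ on that metrized graph by a measure of $o(1)$ total variation, and with enough control to compensate the at most logarithmic growth of $g_{(G,q)}$.

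Granting (i) and (ii) I would substitute into \eqref{definephi}: since $\mu_u$ has total mass one, replacing $g_\Ar^{(u)}$ by $g_{(G,q)}\circ(\mathrm{red}\times\mathrm{red})$ changes $\varphi(\Xbar_u)$ by $O(1)$, and a change of variables turns the remaining integral into $\iint_{G\times G}g_{(G,q)}\,\d(\mathrm{red}_*\mu_u)\,\d(\mathrm{red}_*\mu_u)$; replacing $\mathrm{red}_*\mu_u$ by $\mu_{(G,q)}$ costs a further $O(1)$ by (ii). What survives is $\iint_{G\times G}g_{(G,q)}\,\d\mu_{(G,q)}\,\d\mu_{(G,q)}$, which is Zhang's combinatorial invariant $\varphi^\tr(G,q)$ evaluated at the edge lengths $-\log\abs{u_i}$; this is \eqref{asymptgenustwo}. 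It then remains to evaluate $\varphi^\tr(G,q)$ for each of the six graphs --- a short calculation with the explicit piecewise-polynomial Green's function of a metrized graph with at most three edges --- which reproduces Table \ref{phiinv}.

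The hard part will be establishing (i) and (ii) with \emph{uniform} constants, in particular across the pinching necks, where neither $g_\Ar^{(u)}$ nor the local mass of $\mu_u$ is a priori bounded, and keeping these bounds uniform in the relative rates at which the various $u_i$ tend to $0$; together with the identification of the combinatorial limits with Zhang's admissible Green's function and admissible measure (rather than with some a priori different graph functions), this is precisely the content of \cite{dhgr} and \cite{dhgrpr}. For the one-parameter sub-degenerations the leading logarithmic term of \eqref{asymptgenustwo} is known from earlier degeneration results for $\varphi$; the novelty in \cite{dhgr}\cite{dhgrpr} is the multi-parameter refinement with a controlled $O(1)$ error term.
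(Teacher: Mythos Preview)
Your proposal contains a genuine gap at its core step. The integral \eqref{definephi} defining $\varphi$ is against $\nu^2$, where $\nu$ is the Chern form of $\oo(\Delta)$ with its Arakelov metric; this is \emph{not} the product measure $\mu_u\times\mu_u$. In fact, by the very normalization of the Arakelov Green's function one has $\int_\Sigma g_\Ar(x,y)\,\mu_\Ar(y)=0$ for every $x$, so the double integral $\iint g_\Ar\,\d\mu_u\,\d\mu_u$ vanishes identically and cannot recover $\varphi(\Xbar_u)$. The same obstruction appears on the graph side: Zhang's invariant $\varphi(\Gamma,q)$ is defined in \eqref{graphphi} as a \emph{diagonal} integral of $g_\mu(x,x)$ against a signed measure built from $\mu$ and $\delta_{K_q}$, not as $\iint g_\mu\,\d\mu\,\d\mu$, which is again zero by the normalization $\int_\Gamma g_\mu(x,y)\,\mu(y)=0$. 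So even granting your inputs (i) and (ii), the substitution you perform does not compute $\varphi$ on either side.

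Separately, your attribution of (i) and (ii) to \cite{dhgr}\cite{dhgrpr} is not accurate: those papers do not prove uniform Green's function or measure comparison statements of the kind you describe. What they do, and what the paper's proof of Theorem~\ref{genustwo} actually uses, is a direct case-by-case analysis: for each of the six types one writes the period matrix $\Omega(u)$ explicitly via the Nilpotent Orbit Theorem, then invokes closed-form asymptotics for $\varphi(\Omega)$ in the relevant boundary regime (e.g.\ \eqref{supergravlimit}, \eqref{minsep}, \eqref{minnonsep}), and finally rewrites these in the deformation coordinates $u_i$. Cases IV--VI are then obtained by further degenerating the elliptic components in Cases II and III and tracking the subleading terms. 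A uniform measure-theoretic argument of the shape you sketch would, if it worked, essentially prove Conjecture~\ref{conjasympt} in full, which remains open; the point of the present paper is precisely that such a proof is not currently available beyond the hyperelliptic locus.
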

In particular, note that for any holomorphic arc $f \colon \Delta \to U$ with $f(0)=0$ and with image not contained in $D$ we obtain from (\ref{asymptgenustwo}) that
\begin{equation} \label{ptlikelimit}
\alpha' \varphi(\Xbar_{f(t)}) = \varphi^\tr(G,q,m_1,\ldots,m_r) + O(\alpha') 
\end{equation}
as $t \to 0$ in $\Delta^*$, where $\alpha'=-(\log|t|)^{-1}$ and $m_i = \ord_0 f^*u_i$ for $i=1,\ldots,r$. In the language of string theory, equation (\ref{ptlikelimit}) says that the ``point-like limit'' of $\varphi$ in the direction of $\Xbar_0$ exists, and is equal to $\varphi^\tr(G,q)$. The notation ${}^\tr$ stands for ``tropical'' and is borrowed from \cite{to} where, for a variety of string integrands on $\mm_{h,n}$, the point-like limit is studied from the point of view of tropical modular geometry. The point-like limit of the N\'eron height pairing on a pair of degree zero divisors in arbitrary genus is computed, and expressed in terms of the combinatorics of the dual graph, in the forthcoming paper \cite{abbf}, see also \cite{bloch}.

Theorem \ref{genustwo} is proved by a case-by-case analysis. One obvious question is whether there is a general mechanism that would produce the various $\varphi^\tr(G,q)$ from Table \ref{phiinv}. In particular, one would like to predict the point-like limits of $\varphi$ in genera larger than two (assuming they exist).

In \cite{zhgs} S. Zhang introduced and studied an invariant $\varphi(G,q)$ for polarized graphs $(G,q)$ that takes values in $\qq(x_1,\ldots,x_r)$ if $G$ has $r$ ordered edges. The invariant $\varphi(G,q)$ is homogeneous of weight one in the variables $x_1,\ldots,x_r$, and serves alongside the Zhang-Kawazumi invariant $\varphi(\Sigma)$ in a formula that relates the height $(\Delta_\xi,\Delta_\xi) \in \rr$ of the so-called canonical Gross-Schoen cycle $\Delta_\xi$ of a smooth, projective and geometrically connected curve $X$ with semistable reduction over a number field with the self-intersection $(\omega_a,\omega_a) \in \rr$ of its admissible relative dualizing sheaf, cf. \cite[Corollary 1.3.2]{zhgs}. We have calculated the graph invariant $\varphi(G,q)$ for polarized graphs of genus two in \cite[Theorem 2.1]{djgenus2}. A quick comparison between Table 1 in \cite{djgenus2} and Table \ref{phiinv} above yields that, perhaps rather strikingly, for each of the topological types I--VI in genus two one has that $\varphi(G,q)=\varphi^\tr(G,q)$. 

We expect that this is not just a coincidence. That is, apart from being a suitable non-archimedean analogue of the Zhang-Kawazumi invariant, the invariant $\varphi(G,q)$ should also be the point-like limit of the Zhang-Kawazumi invariant in the direction of each stable curve with dual graph $(G,q)$, in the following sense.
\begin{conj} \label{conjasympt}
Let $\Xbar_0$ be a complex stable curve of arithmetic genus $h \geq 2$. Let $(G,q)$ be the polarized dual graph of $\Xbar_0$, and write $E(G)=\{e_1,\ldots,e_r\}$ for the edge set of $G$. Let $0 \in U \subset \cc^{3h-3}$ be the universal deformation space of $\bar{X}_0$ as an analytic stable curve,  and let $\pi \colon \Xbar \to U$ be the associated Kuranishi family such that $\bar{X}_0$ is the fiber of $\pi$ at $0$. Suppose that $u_1\cdots u_r$ is an equation for the divisor $D$ in $U$ given by the points $u \in U$ such that $\Xbar_u$ is not smooth. We assume that the choice of coordinates in $U \subset \cc^{3h-3}$ is compatible with the given ordering of the edges of $G$. Let $f \colon \Delta \to U$ be a holomorphic arc with $f(0)=0$, and with image not contained in $D$. Let $\varphi(G,q) \in \qq(x_1,\ldots,x_r)$ be Zhang's graph invariant of $(G,q)$. Then one has the asymptotics
\begin{equation} \label{ptlikelimitbis}
\alpha' \varphi(\Xbar_{f(t)}) = \varphi(G,q,m_1,\ldots,m_r) + O(\alpha') 
\end{equation}
as $t \to 0$ in $\Delta^*$, where $\alpha'=-(\log|t|)^{-1}$ and $m_i = \ord_0 f^*u_i$ for $i=1,\ldots,r$.
\end{conj}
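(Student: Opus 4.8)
The plan is to attack the conjecture only in the hyperelliptic case, as the title of the note indicates, where one has explicit control of everything in sight. The starting point is the classical fact that a family of hyperelliptic Riemann surfaces over $\Delta^*$ degenerating to a hyperelliptic stable curve $\Xbar_0$ is, up to base change, the family of double covers of $\PP^1$ (or of a degenerating chain of $\PP^1$'s) branched along $2h+2$ sections whose collisions are governed by the combinatorics of $(G,q)$. I would first set up this normal form: after a finite base change $t \mapsto t^N$ one may arrange that the branch points $x_1(t),\ldots,x_{2h+2}(t)$ partition into clusters, each cluster of branch points colliding at a rate $|t|^{a}$ for suitable rational exponents $a$, and that the combinatorial data of the clustering (the ``tree of clusters'') reproduces the polarized dual graph $(G,q)$ together with the orders of vanishing $m_i = \ord_0 f^*u_i$. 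This is exactly the picture underlying the Deligne-Mumford stable reduction for hyperelliptic curves, and it lets me replace the abstract Kuranishi coordinates $u_i$ by honest branch-point coordinates.

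\textbf{Main steps.} With the family in normal form, the proof has three ingredients. First, I would use the known explicit formula for the Arakelov-Green's function $g_\Ar$ on a hyperelliptic Riemann surface in terms of the branch points and the period matrix — equivalently, the expression of $\varphi$ via the Bergman/Arakelov metric pulled back from $\PP^1$ under the hyperelliptic map — so that $\varphi(\Xbar_{f(t)})$ becomes an integral over $\PP^1 \times \PP^1$ of an explicit kernel depending on $t$ through the positions of the branch points. Second, I would carry out the asymptotic analysis of this integral as $t \to 0$: the dominant $-\log|t|$ contributions come from the regions where two or more branch points are close, and a stationary-phase / cluster-expansion argument (localizing near each cluster of the tree and rescaling) shows that $-(\log|t|) \cdot \varphi$ converges, with the limit given by a sum over clusters of local contributions. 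Third, I would identify this sum with Zhang's graph invariant $\varphi(G,q)$ evaluated at $(m_1,\ldots,m_r)$: here I would use Zhang's own description of $\varphi(G,q)$ for hyperelliptic polarized graphs — Zhang shows (loc.\ cit.) that for hyperelliptic $(G,q)$ the invariant $\varphi(G,q)$ has a closed form in terms of the associated ``metrized tree'' and the genus function $q$ — and match it term-by-term with the cluster sum. The homogeneity and weight-one property of $\varphi(G,q)$ serves as a useful consistency check, and the genus-two table (types I--VI) provides a sanity check on low-complexity clusters.

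\textbf{The main obstacle.} The hard part will be the second step: making the asymptotic analysis of the double integral uniform enough to conclude that the error is genuinely $O(\alpha') = O((\log|t|)^{-1})$ rather than merely $o(1)$, and in particular tracking the interaction between \emph{nested} clusters (branch points that are close, inside a group that is itself close to another group). Heuristically each nesting level contributes its own factor of $-\log|t|$, and these must combine to give precisely the rational function $\varphi(G,q)$ and not some other weight-one function; controlling the cross terms requires a careful choice of cutoff scales and an induction on the depth of the cluster tree. A secondary technical point is the behavior of the period matrix: under degeneration the period matrix develops logarithmic singularities (the monodromy is unipotent), and one must check that the theta-function factors in $g_\Ar$ contribute only to the $O(1)$ term after multiplication by $\alpha'$ — this is expected from the Arakelov metric being the ``admissible'' one in Zhang's sense, but it needs to be verified against the explicit hyperelliptic formulas. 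Once these uniformity issues are handled, identification with $\varphi(G,q)$ should follow formally from Zhang's combinatorial formula in the hyperelliptic case.
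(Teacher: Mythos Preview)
First, note that the statement you are attempting is labeled a \emph{conjecture} in the paper and is not proved there in full generality; the paper establishes only the hyperelliptic case (its Theorem~\ref{main}), and with the sharper estimate $\varphi(\Xbar_{f(t)}) \sim -\varphi(G,q,m_1,\ldots,m_r)\log|t|$. You have correctly restricted your attention to this case.

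Your route, however, is genuinely different from the paper's, and there is a real gap. The paper does \emph{not} attempt a direct asymptotic analysis of the defining double integral $\int_{\Sigma\times\Sigma} g_{\Ar}\,\nu^2$. Instead it invokes a closed identity, valid on all of $\ii_h$,
\[
(2h-2)\,\varphi(\Sigma) \;=\; -8(2h+1)h\log(2\pi) \;-\; 3(h/n)\log\|\Delta_h\|(\Sigma) \;-\; (2h+1)\,\delta_F(\Sigma),
\]
expressing $\varphi$ in terms of the Petersson norm of the hyperelliptic discriminant modular form $\Delta_h$ and the Faltings delta-invariant $\delta_F$ (this is \cite{djsecond}). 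The asymptotics of each piece are then imported from elsewhere: $\|\Delta_h\|$ from Kausz's computation of $\ord_0(\Lambda)$ \cite{ka}, and $\delta_F$ from the author's own \cite{djdelta}. The terms in $\log\det\Im\Omega(t)$ cancel, leaving a linear form in $\log|t|$ whose coefficient is a combinatorial expression $3d-(2h+1)(\delta+\vareps)$ in the node-type multiplicities $\xi'_0,\xi_j,\delta_i$. The final step is a \emph{purely combinatorial} identity matching this with $(2h-2)\varphi(\Gamma,q)$, and that identity is supplied not by Zhang but by Yamaki \cite{ya}, who computes $\psi=\vareps+\frac{2h-2}{2h+1}\varphi$ explicitly for hyperelliptic polarized graphs.

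The gap in your plan is Step~2. There is no ``known explicit formula for $g_{\Ar}$ on a hyperelliptic Riemann surface in terms of the branch points'' of the kind that would let one reduce $\varphi$ to an integral over $\PP^1\times\PP^1$ amenable to a cluster expansion. The available expressions for $g_{\Ar}$ (Bost--Faltings, via theta functions) are not specific to hyperelliptic curves and do not decouple from the period matrix in the way your localization argument requires; controlling the degenerating theta factors and the curvature form $\nu$ simultaneously under nested clusters is essentially as hard as the asymptotics of $\delta_F$ itself, which in \cite{djdelta} already takes substantial Arakelov-theoretic machinery. Your proposal would thus be reproving, in disguise and from scratch, both the $\delta_F$-asymptotics and the identity from \cite{djsecond}, rather than using them. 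A secondary issue: the closed form for $\varphi(G,q)$ on hyperelliptic graphs you attribute to Zhang is in fact Yamaki's result \cite{ya}; Zhang's paper \cite{zhgs} defines the invariant but does not compute it in the hyperelliptic case.

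In short: your direct-integral strategy is conceptually natural but is not known to be executable, and the paper bypasses it entirely by trading $\varphi$ for $\delta_F$ and $\|\Delta_h\|$, whose degenerations are already understood.
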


To obtain a feeling for the strength of this conjecture, we note that since the Zhang-Kawazumi invariant in genus $h \geq 2$ is always positive, and any polarized stable graph of genus $h \geq 2$ is the dual graph of some stable curve of genus $h$, we immediately obtain from (\ref{ptlikelimitbis}) that for any polarized stable graph $(G,q)$ of genus $h \geq 2$, and any tuple $(m_1,\ldots,m_r)$ of positive integers with $r=|E(G)|$, the rational number $\varphi(G,q,m_1,\ldots,m_r)$ should be non-negative. This statement is known to be true as follows from \cite[Theorem~2.11]{ci}, however the proof of the inequality in \cite[Theorem~2.11]{ci} seems to be rather involved. 

On the positive side, we know that in all cases where $r=1$ Conjecture \ref{conjasympt} is true. This follows from comparing \cite[Corollary 1.6]{djsecond} and \cite[Propositions 4.4.1 and 4.4.3]{zhgs}. The asymptotic result of \cite[Corollary 1.6]{djsecond} has been refined in \cite[Theorem A]{djasympt}. We note that the cases where $r=1$ correspond to the generic points of the boundary divisor of $\mm_h$ in $\overline{\mm}_h$.

The aim of the present paper is to prove the following partial result, where we limit ourselves to hyperelliptic curves (of any genus). When $\psi_1,\psi_2$ are continuous functions on the punctured disc $\Delta^*$, we write $\psi_1\sim \psi_2$ if the difference $\psi_1-\psi_2$ extends as a continuous function over $\Delta$. 

\begin{thm} \label{main} Let $\Xbar_0$ be a complex hyperelliptic stable curve of arithmetic genus $h \geq 2$. Let $(G,q)$ be the polarized dual graph of $\Xbar_0$, and write $E(G)=\{e_1,\ldots,e_r\}$. Let $0 \in U \subset \cc^{2h-1}$ be the universal deformation space of $\bar{X}_0$ as an analytic hyperelliptic stable curve, and let $\pi \colon \Xbar \to U$ be the associated Kuranishi family where $\bar{X}_0$ is the fiber of $\pi$ at $0$. Suppose that $u_1\cdots u_r$ is an equation for the (reduced normal crossings) divisor $D$ in $U$ given by the points $u \in U$ such that $\Xbar_u$ is not smooth. Let $f \colon \Delta \to U$ be a holomorphic arc with $f(0)=0$, and with image not contained in $D$. Let $\varphi(G,q) 
\in \qq(x_1,\ldots,x_r)$ be Zhang's graph invariant of $(G,q)$. Then one has the asymptotics
\begin{equation} \label{ptlikelimitthrice}
\varphi(\Xbar_{f(t)}) \sim -\varphi(G,q,m_1,\ldots,m_r)\log |t|
\end{equation}
as $t \to 0$ in $\Delta^*$, where $m_i = \ord_0 f^*u_i$ for $i=1,\ldots,r$.
\end{thm}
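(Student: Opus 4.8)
The plan is to reduce the archimedean asymptotics of $\varphi(\Xbar_{f(t)})$ to a controlled combination of two more tractable quantities, and then to match the combination against Zhang's graph invariant $\varphi(G,q)$. The natural decomposition to exploit is the one underlying the number-theoretic formula of \cite{zhgs}: on a semistable curve the local invariant $\varphi$ at an archimedean place combines with the admissible self-intersection $(\omega_a,\omega_a)$ and the height of the Gross--Schoen cycle, and the quantities $\lambda_1(\Sigma) = \frac{1}{3}\bigl((\omega_a,\omega_a)\text{-type term}\bigr)$ relating $\varphi$, the Faltings delta invariant $\delta_{\mathrm{F}}$, and the discriminant. Concretely, I would invoke the identity $\varphi(\Sigma) = -\frac{1}{4}\delta_{\mathrm{F}}(\Sigma) + \text{(something with known asymptotics)}$, or rather the hyperelliptic refinement: for a hyperelliptic Riemann surface of genus $h$ there is an explicit formula (going back to Bost, de Jong, Guàrdia) expressing $\varphi(\Sigma)$ in terms of $\delta_{\mathrm{F}}(\Sigma)$, the Petersson norm $\|\Delta_h\|$ of the hyperelliptic discriminant modular form, and the self-intersection $e(\Sigma)$ of the relative dualizing sheaf on the stable model — all quantities whose degeneration behavior along $f$ is classical (Faltings, Jorgenson, Wentworth, Wilms). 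This turns the problem into a bookkeeping exercise with three pieces of known asymptotics.

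The key steps, in order, would be: (1) fix the hyperelliptic Kuranishi family $\pi\colon\Xbar\to U$, pull back along $f$ to get a one-parameter degeneration $\Xbar_{f(t)}\to\Delta$ with stable central fiber $\Xbar_0$, and identify the orders $m_i=\ord_0 f^*u_i$ with the edge-length assignment $\ell(e_i)=m_i$ on $(G,q)$ in the tropicalization; (2) write down the hyperelliptic expression $\varphi(\Sigma) = a_h\,\delta_{\mathrm{F}}(\Sigma) + b_h\log\|\Delta_h\|(\Sigma) + c_h\, e(\Sigma) + (\text{bounded})$ with the explicit rational constants $a_h,b_h,c_h$; (3) substitute the known boundary asymptotics — $\delta_{\mathrm{F}}$ has a $-\frac{4h-1}{3h}\log|t|$-type leading term governed by the $\lambda$-class plus a graph-combinatorial correction (this is where the $\mu$-invariant / effective resistance of $(G,q)$ enters), $\log\|\Delta_h\|$ degenerates with order read off from the branch-point collisions, and $(\omega_a,\omega_a)$-type self-intersection contributes Zhang's $\varphi(G,q)$-flavored tropical term directly via the admissible metric; (4) collect the $\log|t|$ coefficients and verify the cancellations force the total to be exactly $-\varphi(G,q,m_1,\dots,m_r)$; the ``$\sim$'' (continuous extension of the difference) is then automatic since each input asymptotic is known with an $O(1)$, in fact continuous, error.

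The main obstacle, and the place where genuine work is needed, is step (3)–(4): namely identifying precisely which combinatorial invariant of $(G,q)$ governs the graph-level term in the Faltings delta asymptotics of a \emph{hyperelliptic} degeneration, and checking that this invariant, combined with the admissible self-intersection term and the discriminant order, telescopes to Zhang's $\varphi(G,q)$ rather than to some a priori different rational function. This requires (a) a clean formula for the admissible pairing $(\omega_a,\omega_a)$ on the polarized graph $(G,q)$ in terms of effective resistances — available from Zhang's graph theory in \cite{zhgs} and the metrized-graph literature — and (b) control of the \emph{hyperelliptic} structure of the admissible degeneration: the branch points collide in prescribed groups, the dual graph is forced to be of a restricted combinatorial type (a ``hyperelliptic stable graph''), and one must verify the discriminant valuation formula $\ord_0 f^*\Delta_h = \sum (\text{branch multiplicities on each edge})$ is compatible with $q$. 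Handling the non-generic boundary strata (several $m_i$ simultaneously large, or several edges coalescing) uniformly — rather than the case-by-case analysis of Theorem \ref{genustwo} — is the technical heart; I expect the hyperelliptic hypothesis is exactly what makes a uniform treatment possible, because it linearizes the family over a moduli space of configurations of $2h+2$ points on $\PP^1$, where all three input asymptotics have closed forms.
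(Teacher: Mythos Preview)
Your overall strategy matches the paper's: reduce to a one-parameter semistable family over $\Delta$, express $\varphi$ for hyperelliptic Riemann surfaces in terms of the Faltings delta-invariant and the Petersson norm of the hyperelliptic discriminant, plug in known boundary asymptotics for each, and match the resulting $\log|t|$ coefficient against $\varphi(G,q)$. Two places where your outline diverges from what actually works deserve comment.

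First, the hyperelliptic decomposition is cleaner than you suggest. There is no $e(\Sigma)$ term and no admissible self-intersection enters; the identity (\cite{djsecond}) is simply
\[
(2h-2)\,\varphi(\Sigma) = -8(2h+1)h\log(2\pi) - \tfrac{3h}{n}\log\|\Delta_h\|(\Sigma) - (2h+1)\,\delta_F(\Sigma),
\]
so only two asymptotic inputs are required. The $\delta_F$ asymptotics in a smooth semistable family are controlled by Zhang's $\varepsilon$-invariant of the dual metric graph (not the $\mu$-invariant or raw effective resistances): one has $\delta_F(\Xbar_t) \sim -(\delta+\varepsilon)\log|t| - 6\log\det\Im\Omega(t)$, from \cite{djdelta}. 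The discriminant piece comes from Kausz's computation \cite{ka} of $\ord_0(\Lambda)$, giving an explicit integer $d$ depending on the hyperelliptic node types $\xi'_0,\xi_j,\delta_i$. The two $\log\det\Im\Omega$ terms cancel against each other.

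Second, and this is the genuine gap in your proposal: your step~(4) is where the actual content lies, and your suggested route through $(\omega_a,\omega_a)$-formulas and effective resistances does not obviously close it. After combining the inputs one obtains
\[
(2h-2)\,\varphi(\Xbar_t) \sim -\bigl(3d - (2h+1)(\delta+\varepsilon)\bigr)\log|t|,
\]
and what must be checked is the purely graph-theoretic identity $(2h-2)\,\varphi(\Gamma,q) = 3d - (2h+1)(\delta+\varepsilon)$ for \emph{hyperelliptic} polarized metric graphs. This is not a formal cancellation one can read off from the general metrized-graph machinery; it is exactly Yamaki's computation \cite{ya} of the auxiliary invariant $\psi = \varepsilon + \tfrac{2h-2}{2h+1}\varphi$ on hyperelliptic graphs, which yields $(2h+1)\psi$ as an explicit linear form in the node-type counts $\delta_0,\xi_j,\delta_i$ that one then verifies equals $3d - (2h+1)\delta$. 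Without invoking (or reproving) Yamaki's result, the matching you describe in step~(4) remains an assertion rather than a proof; your sketch does not name this ingredient, and the $(\omega_a,\omega_a)$ approach you propose instead would, at best, reconstitute it by a longer path.
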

Note that the estimate in (\ref{ptlikelimitthrice}) is stronger than the one claimed in (\ref{ptlikelimitbis}). We also note that Theorem \ref{main} gives a uniform explanation for the six occurring point-like limits in equation (\ref{ptlikelimit}). However, the result seems not strong enough to produce the $O(1)$-symbol in (\ref{asymptgenustwo}). 

The contents of this paper are as follows. For the sake of concreteness, and also to explain the notation used in Table \ref{phiinv}, we will first discuss the proof of Theorem \ref{genustwo} at some length, in Section \ref{reviewgenustwo} below. Then in Section \ref{prelims} we review the definition of Zhang's graph invariant $\varphi(G,q)$, and recall some of its properties. In the final Section \ref{proofmain} we present our proof of Theorem \ref{main}. 

\section{Proof of Theorem \ref{genustwo}} \label{reviewgenustwo}

As noted before, Theorem \ref{genustwo} follows by gathering together various results from \cite{dhgr} \cite{dhgrpr} about the asymptotics of the Zhang-Kawazumi invariant in genus two. In this section, we adapt these results to our situation and provide some details about the $O(1)$-symbol in (\ref{asymptgenustwo}). 

A polarized graph is a finite connected graph $G$ together with a map $q \colon V(G) \to \zz_{\geq 0}$, called the polarization. Here $V(G)$ denotes the vertex set of $G$. Let $(G,q)$ be a polarized graph, with Betti number $b_1(G)$. We call the non-negative integer $h(G,q) = b_1(G) + \sum_{x \in V(G)} q(x)$ the genus of $(G,q)$. The polarized graph $(G,q)$ is called stable if for each vertex $x$ with $q(x)=0$ the number of half-edges emanating from $x$ is at least $3$. For any given non-negative integer $h$, there are, up to isomorphism, only finitely many stable polarized graphs $(G,q)$ of genus $h(G,q)=h$. If $h=2$, there are precisely seven types. Apart from the trivial polarized graph (one vertex with $q=2$, no edges), we list these types below as Cases I-VI. 

Let $\Xbar_0$ be a complex stable curve. The dual graph $G$ of $\Xbar_0$ is a finite connected graph whose vertex set $V(G)$ consists of the irreducible components of $\Xbar_0$, and whose edge set $E(G)$ consists of the singular points of $\Xbar_0$. The vertex assignment map is given by sending a singular point $e \in E(G)$ to the set of irreducible components of $\Xbar_0$ that $e$ lies on. The dual graph of $\Xbar_0$ has a canonical polarization $q$ given by assigning to each $x \in V(G)$ the genus of the normalization of $x$. The associated polarized graph $(G,q)$ is stable. If $\Xbar_0$ has arithmetic genus $h$, then the genus of $(G,q)$ is also equal to~$h$. 

Now let $\Xbar_0$ be a complex stable curve of arithmetic genus two, with (polarized, stable) dual graph $(G,q)$ and with $E(G)=\{e_1,\ldots,e_r\}$. Let $0 \in U \subset \cc^{3}$ be the universal deformation space of $\bar{X}_0$ as an analytic stable curve, let $\pi \colon \Xbar \to U$ be the Kuranishi family such that $\bar{X}_0$ is the fiber of $\pi$ at $0$, and suppose that $u_1\cdots u_r=0$ defines the divisor $D$ of $U$ given by the points $u \in U$ such that $\Xbar_u$ is not smooth. 

In \cite{dhgr} \cite{dhgrpr} one considers period matrices 
\[ \Omega = \left( \begin{array}{cc} \tau_1 & \tau \\ \tau & \tau_2 \end{array} \right) \, , \quad \Im \Omega>0 \]
of genus two curves near the boundary of Siegel upper half space $\HH_2$ of degree two. In particular $\tau_1, \tau_2$ are elements of the upper half plane $\HH_1$. In order to obtain the asymptotics as stated in Theorem \ref{genustwo}, we need to translate the asymptotics from \cite{dhgr} \cite{dhgrpr}, which are phrased in terms of $\Omega$, in terms of the standard coordinates $u_1,u_2,u_3$ on the universal deformation space $U$. In other words, we first need to write $\Omega=\Omega(u)$ explicitly in coordinates $u_1, u_2, u_3$. In those cases where the period matrices associated to $\Xbar_u$ are unbounded, one finds an expansion of $\Omega$ in terms of $u_1, u_2, u_3$ using a suitable several variables version of the Nilpotent Orbit Theorem, cf. \cite{ca}.

We recall that there are six topological types I-VI of (non-trivial) polarized stable genus two graphs to deal with. We discuss them case by case. For future reference we have assigned lengths $m_1, \ldots$ to the edges of each graph. In the following $\vartheta_1(z,\tau)$ denotes Riemann's theta function with modulus $\tau$ in genus one (in particular we have $z \in \cc$, $\tau \in \HH_1$). Finally $\eta(\tau)$ denotes Dedekind's eta function. \\

Case I -- This is the case of the ``sunset'' graph,
\begin{center}
\begin{tikzpicture}
\draw (0,0) circle [radius=0.75];
\draw (-0.75,0) node[circle, draw, fill=black, inner sep=0pt, minimum width=4pt]{} -- (0.75,0) node[circle, draw, fill=black, 
inner sep=0pt, minimum width=4pt]{};
\node at (0.75,0.75) {$m_1$};
\node [above] at (0,0) {$m_2$};
\node at (0.75,-0.75) {$m_3$};
\end{tikzpicture}
\end{center}
Both vertices have $q=0$. The corresponding stable curve $\Xbar_0$ consists of two projective lines, joined together at three points. If $0 \in U \subset \cc^3$ is the universal deformation space of $\Xbar_0$, then an analysis of the monodromy on the general fiber of $\Xbar \to U$ together with the Nilpotent Orbit theorem \cite{ca} gives 
\[ \Omega(u_1,u_2,u_3) = \left( \begin{array}{cc} 1  & 0  \\ 0  &  0 \end{array} \right)\frac{\log u_1}{2\pi i} + \left( \begin{array}{cc} 0  & 0  \\ 0  & 1  \end{array} \right) \frac{\log u_2}{2\pi i} + \left( \begin{array}{cc} 1  & 1  \\  1 & 1  \end{array} \right)\frac{\log u_3}{2\pi i} \, , \]
up to a bounded holomorphic function. We obtain 
\[ \Im \Omega = \left( \begin{array}{cc} L_1+L_3 & L_3 \\ L_3 & L_2+L_3 \end{array} \right) + O(1) \, , \quad L_i = -\frac{1}{2\pi}\log|u_i| \, , \quad i=1, 2, 3 \, ,  \]
and with this notation then following \cite[Section~3.2]{dhgrpr} we have the asymptotics
\begin{equation} \label{supergravlimit}  \varphi(\Omega) = \frac{\pi}{6} \left[ L_1+L_2+L_3 - \frac{5L_1L_2L_3}{L_1L_2+L_2L_3+L_3L_1} \right] + O(1/L_i^2) 
\end{equation}
for the Zhang-Kawazumi invariant.
It follows that
\[ \begin{split} \varphi(\Xbar_u) = -\frac{1}{12}(&
\log|u_1|+\log|u_2|+\log|u_3| ) \\ & + \frac{5}{12} \frac{\log|u_1|\log|u_2|\log|u_3|   }{\log|u_1|\log|u_2| + \log|u_2|\log|u_3| + \log|u_3|\log|u_1|   }  + O(1)
\end{split} \]
as $t \to 0$ through $U \setminus D$. Comparing with Table \ref{phiinv} we see that in this case (\ref{asymptgenustwo}) is satisfied.

Case II -- This is the ``minimal separating degeneration limit'', obtained by letting $\tau \to 0$ while keeping $\tau_1, \tau_2 \in \HH_1$ bounded away from zero and infinity. The corresponding stable curve $\Xbar_0$ is the join of the elliptic curves with moduli $\tau_1(0)$ and $\tau_2(0)$. The dual graph $G$ looks like
\begin{center}
\begin{tikzpicture}
\draw node[circle, draw, fill=black, 
inner sep=0pt, minimum width=4pt]{} (0,0) -- (1,0) node[circle, draw, fill=black, 
inner sep=0pt, minimum width=4pt]{};
\node [above] at (0.5,0) {$m_1$};
\end{tikzpicture}
\end{center}
where both vertices have $q=1$. 
Following \cite[Theorem A or Section 5]{djasympt} or \cite[Section 6.1]{dhgr} we then have
\begin{equation} \label{minsep} \varphi = -\log|2\pi  \tau \eta^2(\tau_1) \eta^2(\tau_2)| + O(\tau^2 \log|\tau| )  
\end{equation}
as $\tau \to 0$. In this case we can identify $\tau=u_1$ so that we obtain the asymptotics
\[  \varphi = - \log|u_1| + O(1) \]
as $t \to 0$. We thus obtain (\ref{asymptgenustwo}) also in this case.

Case III -- This is the ``minimal non-separating degeneration limit'', obtained by sending $\tau_2 \to i \infty$ while keeping $\tau_1 \in \HH$ and $\tau \in \cc$ bounded away from zero and infinity. The corresponding stable curve $\Xbar_0$ is the elliptic curve with modulus $\tau_1(0)$ with the points determined by $z=0$ and $z=\tau(0)$ identified. The dual graph $G$ looks like
\begin{center}
\begin{tikzpicture}
\draw (0,0) circle [radius=0.5];
\draw (-0.5,0) node[circle, draw, fill=black, 
inner sep=0pt, minimum width=4pt]{};
\node at (0.75,0) {$m_1$};
\end{tikzpicture}
\end{center}
where the unique vertex has $q=1$. Following \cite[Theorem A or Section 5]{djasympt} or \cite[Section 6.1]{dhgr} we have
\begin{equation} \label{minnonsep} \varphi = \frac{\pi}{6}  \left( \Im \tau_2 +\frac{5(\Im \tau)^2}{\Im \tau_1} \right) - \log \left| \frac{\vartheta_1(\tau,\tau_1)}{\eta(\tau_1)} \right| + O(1/\Im \tau_2) 
\end{equation}
as $\tau_2 \to i \infty$. In terms of the local coordinates $u_1,u_2,u_3$ of $U$ we have, by the Nilpotent Orbit theorem,
\[  \tau_2 = \frac{ \log u_1}{2\pi i }  \]
up to a bounded holomorphic function of $u$. We obtain the asymptotics
\[  \varphi(\Xbar_u) = -\frac{1}{12} \log |u_1| + O(1) \]
as $t \to 0$, thus verifying (\ref{asymptgenustwo}) again.\\

The three remaining cases can be obtained by successively further degenerating the elliptic curves in Cases II and III above.
One then obtains (cf. \cite[Section~6.2]{dhgr}) the required asymptotics of $\varphi$ from the previous two cases by a careful study of the stated subleading terms. \\

Case IV -- Here we take a stable curve as in Case II but further degenerate the modulus $\tau_2$ by sending $\tau_2 \to i \infty$. The resulting stable curve is the join of an elliptic curve with modulus $\tau_1(0)$ and a rational curve with points $0$ and $\infty$ identified. The dual graph $G$ looks like
\begin{center}
\begin{tikzpicture}
\draw (-1,0) node[circle, draw, fill=black, inner sep=0pt, minimum width=4pt]{} -- (0,0) node[circle, draw, fill=black, 
inner sep=0pt, minimum width=4pt]{};
\draw (0.5,0) circle [radius=0.5];
\node at (1.25,0) {$m_2$};
\node [above] at (-0.5,0) {$m_1$};
\end{tikzpicture}
\end{center}
The shape of the degeneration is 
\[ \tau \to 0 \, , \, \tau_2 \to i \infty  \, , \quad
\varphi(\Omega) = -\log|\tau| + \frac{\pi}{6} \Im \tau_2 + O(1) \, , \]
and this leads to the asymptotics
\[ \varphi(\Xbar_u)=-\log|u_1| -\frac{1}{12}\log |u_2| + O(1)
\, .  \]

Case V -- We further degenerate the curve in Case III by sending $\tau_1 \to i \infty$. The resulting stable curve is a rational curve with two double points. Its dual graph looks like
\begin{center}
\begin{tikzpicture}
\draw (-0.5,0) circle [radius=0.5];
\draw (0.5,0) circle [radius=0.5];
\draw (0,0) node[circle, draw, fill=black, inner sep=0pt, minimum width=4pt]{};
\node at (-1.25,0) {$m_1$};
\node at (1.25,0) {$m_2$};
\end{tikzpicture}
\end{center}
where the vertex has $q=0$ and we have
\[ \tau_1 \to i \infty  \, , \, \tau_2 \to i \infty  \, , \quad
\varphi(\Omega) =  \frac{\pi}{6} \Im \tau_1 + \frac{\pi}{6} \Im \tau_2 + O(1) \, , \]
so that 
\[\varphi(\Xbar_u)=
-\frac{1}{12}\log|u_1| -\frac{1}{12}\log |u_2| + O(1) \, . 
 \]

Case VI -- In this final case we further degenerate the elliptic curve with modulus $\tau_1$ from Case IV by sending $\tau_1 \to i \infty$. The resulting stable curve is the join of two rational curves each carrying a single double point. The dual graph of $\Xbar_0$ in this case looks like
\begin{center}
\begin{tikzpicture}
\draw (-1,0) circle [radius=0.5];
\draw (1,0) circle [radius=0.5];
\draw (-0.5,0) node[circle, draw, fill=black, inner sep=0pt, minimum width=4pt]{} -- (0.5,0) node[circle, draw, fill=black, inner sep=0pt, minimum width=4pt]{};
\node at (-1.75,0) {$m_2$};
\node at (1.75,0) {$m_3$};
\node [above] at (0,0) {$m_1$};
\end{tikzpicture}
\end{center}
where both vertices have $q=0$. We have
\[ \begin{split} \tau \to 0 \, , \,  \tau_1 \to i \infty  \, , \, & \tau_2 \to i \infty \, , \\
& \varphi(\Omega) = -\log|\tau| + \frac{\pi}{6} \Im \tau_1 + \frac{\pi}{6}\Im \tau_2 +O(1) \, , \end{split} \]
so that
\[ \varphi(\Xbar_u)=
-\log|u_1| -\frac{1}{12}\log |u_2| - \frac{1}{12}\log|u_3| +O(1) \, . 
 \]

Theorem \ref{genustwo} is hereby proved. In closing this section we mention that more refined versions of the asymptotics in (\ref{supergravlimit}), (\ref{minsep}) and (\ref{minnonsep}) have been determined by B. Pioline \cite{pio}. 

\section{Polarized metric graphs and Zhang's graph invariant} \label{prelims}

The aim of this section is to recall from \cite{br} \cite[Appendix]{zh} some notions related to metric graphs, and to introduce Zhang's graph invariant $\varphi(\Gamma,q)$ of a polarized metric graph $(\Gamma,q)$.

A metric graph $\Gamma$ is a compact connected metric space such that for each $x \in \Gamma$ there exist a natural number $n$ and $\eps \in \rr_{>0}$ such that $x$ has a neighborhood isometric to the set 
\[ S(n,\eps)= \{ z \in \cc \, : \, z = t e^{2 \pi i k/n} \,\, \textrm{for some} \,\, 0 \leq t < \eps \,\, \textrm{and some} \,\, k \in \zz \} \, , \]
endowed with its natural metric. If $\Gamma$ is a metric graph, then for each $x \in \Gamma$ the integer $n$ is uniquely determined, and is called the valence of $x$. We denote by $V_0 \subset \Gamma$ the set of points $x \in \Gamma$ with valency $\neq 2$. This is a finite set. Any finite non-empty set $V \subset \Gamma$ containing $V_0$ is called a vertex set of $\Gamma$.

If $V$ is a vertex set of $\Gamma$ then $\Gamma \setminus V$ is a finite union of open intervals. The closure of a connected component of $\Gamma \setminus V$ is called an edge associated to $V$. Let $E$ be the set of edges associated to $V$. For $e \in E$ we call $e\setminus e^o$ the set of endpoints of $e$. This is a finite set consisting of either one (when $e$ is a loop) or two (when $e$ is a closed interval) elements. 

Let $\Gamma$ be a metric graph, and assume a vertex set $V$ is given, with associated edge set $E$. For an edge $e \in E$ we let $\d y$ denote the usual Lebesgue measure on $e$, and $m(e)$ the volume of $e$. 

Elements of $\rr^V$ are called divisors on $\Gamma$ (with respect to the given vertex set $V$). For a divisor $D=\sum_{x \in V} a(x) x$ we define the degree of $D$ as $\deg D= \sum_{x \in V} a(x)$. Let $C(\Gamma)$ be the set of $\rr$-valued continuous functions on $\Gamma$ that are smooth outside $V$ and have well-defined derivatives at each $v \in V$ along each $e \in E$ emanating from $v$. Denote by $C(\Gamma)^*$ the set of linear functionals on $C(\Gamma)$. The elements of $C(\Gamma)^*$ are called currents on $\Gamma$. As an important example, each $x \in \Gamma$ gives rise to a Dirac current $\delta_x \in C(\Gamma)^*$ given by sending $f \mapsto f(x)$. Integration of currents over $\Gamma$ gives a natural linear map $C(\Gamma)^* \to \rr$. Following \cite[Section 1.2]{br} or \cite[Appendix]{zh} there exists a natural Laplace operator $\Delta \colon C(\Gamma) \to C(\Gamma)^*$.  
 
Let $K_\can$ be the divisor on $\Gamma$ given by $K_\can(x)= v(x)-2$ for each $x \in V$. Note that $\deg K_\can = 2b_1(\Gamma)-2$, where $b_1(\Gamma)$ is the Betti number of $\Gamma$. For an edge $e \in E$, we denote by $r(e)$ the effective resistance between the endpoints of $e$ in $\Gamma \setminus e^o$, where $\Gamma$ is viewed as an electric circuit with edge resistances given by the $m(e)$. We set $r(e)$ to be $\infty$ if $\Gamma \setminus e^0$ is disconnected. We have a canonical current \cite{cr} 
\[ \mu_{\can} = -\frac{1}{2}\delta_{K_\can} + \sum_{e \in E} \frac{\d y}{m(e) + r(e)} \]
in $C(\Gamma)^*$. By \cite[Theorem 2.11]{cr} the current $\mu_\can$ is a probability measure, that is we have $\int_\Gamma \mu_{\can} = 1$. 

A polarization on $\Gamma$ is a map $q \colon V \to \zz_{\geq 0}$. Associated to a polarization $q$ we consider the divisor
\[ K_q = K_\can + 2 \sum_{x \in V} q(x) \cdot x  \]
on $\Gamma$. Putting $h(\Gamma,q)=b_1(\Gamma) + \sum_{x \in V} q(x)$ we see that $\deg K_q = 2h(\Gamma,q)-2$. We call $h(\Gamma,q)$ the genus of the polarized metric graph $(\Gamma,q)$. For polarized metric graphs $(\Gamma,q)$ of positive genus $h$, Zhang's admissible measure is defined to be the current
\[ \mu = \mu(\Gamma,q) = \frac{1}{2h} \left( \delta_{K_q} + 2 \mu_{\can} \right)  \]
in $C(\Gamma)^*$. We note that $\int_\Gamma \mu = 1$. Finally, Zhang's Arakelov-Green's function $g_\mu(x,y)$ on $\Gamma \times \Gamma$ is determined by the conditions 
\[ \Delta_y g_\mu(x,y) = \delta_x(y)-\mu(y) \, , \quad \int_\Gamma g_\mu(x,y) \mu(y) = 0  \]
for all $x \in \Gamma$. We refer to \cite[Section 3 and Appendix]{zh} or \cite[Section 1.5]{br} for a proof that $g_\mu(x,y)$ exists in $C(\Gamma)$ for all $x \in \Gamma$, as well as for a discussion of its main properties.

Let $(\Gamma,q)$ be a polarized metric graph of genus $h$. We denote by $\delta(\Gamma)$ the total volume of $\Gamma$. Zhang's graph invariant $\varphi(\Gamma,q)$ is defined via the formula \cite[Section~1.3]{zhgs}
\begin{equation} \label{graphphi} \varphi(\Gamma,q) = -\frac{1}{4}\delta(\Gamma) + \frac{1}{4} \int_\Gamma g_\mu(x,x)((10h+2)\mu(x) - \delta_{K_q}(x)) \, .   
\end{equation}

For later reference we define the similar epsilon-invariant \cite[Theorem~4.4]{zh} 
\begin{equation} \label{defeps} \varepsilon(\Gamma,q) = \int_\Gamma g_\mu(x,x)((2h-2)\mu(x)+\delta_{K_q}(x)) \, . 
\end{equation}

Let $G$ be a finite connected graph with set of edges $E(G)$ and set of vertices $V(G)$. Then to $G$ we have naturally associated a metric graph by glueing $|E(G)|$ closed intervals $[0,1]$ according to the vertex assignment map. More generally, if a weight $m \in \rr_{>0}^{E(G)}$ of $E(G)$ is given, then one has an associated metric graph $\Gamma=(G,m)$ by glueing the closed intervals $[0,m(e)]$, where $e$ runs through $E(G)$, according to the vertex assignment map. Usually we assume that the edge set $E(G)$ is ordered, so that we may write $\Gamma=(G,m_1,\ldots,m_r)$ where $r=|E(G)|$. Note that $\Gamma$ comes with a natural vertex set $V(G)$. A polarization $q \colon V(G) \to \zz_{\geq 0}$ of $G$ naturally gives rise to a polarized metric graph $(\Gamma,q)=(G,q,m_1,\ldots,m_r)$. 

We see that if $(G,q)$ is a polarized graph, then for each $m \in \rr_{>0}^{E(G)}$ we have an element $\varphi(G,q,m) \in \rr$. We thus obtain a natural map $\varphi(G,q) \colon \rr_{>0}^{E(G)} \to \rr$. By \cite[Proposition 4.6]{fa} we have the following important property of $\varphi(G,q)$. Let $b_1(G)$ be the first Betti number of $G$. Then there exist homogeneous integral polynomials $P$, $Q$ of degree $2b_1(G)+1$ resp. $2b_1(G)$ such that 
\[ \varphi(G,q,m_1,\ldots,m_r)=P(m_1,\ldots,m_r )/Q(m_1,\ldots,m_r) \]
 for all $(m_1,\ldots,m_r) \in \rr_{>0}^r$. In particular, the map $\varphi(G,q) \colon \rr_{>0}^{E(G)} \to \rr$ is continuous, and homogeneous of weight one. Moreover, we may view $\varphi(G,q)$ as an element of the rational function field $\qq(x_1,\ldots,x_r)$. 

In \cite{cicomp} \cite{fab} one finds algorithms to calculate $\varphi(G,q)\in \qq(x_1,\ldots,x_r)$, together with many examples in low genera. In \cite{djgenus2} a list is given of the $\varphi(G,q)$ for all stable polarized graphs of genus two. In \cite{ci3} a complete study is made of $\varphi(G,q)$ for all stable polarized graphs of genus three. In \cite{yacorn} \cite{ya} the invariant $\varphi(G,q)$ is studied for so-called hyperelliptic polarized graphs. In our proof of Theorem \ref{main} below we will make essential use of a result from \cite{ya} that relates $\vareps(G,q)$ and $\varphi(G,q)$ for hyperelliptic polarized graphs. In \cite{ci} an effective lower bound is proven for the invariant $\varphi(\Gamma,q)$ for general polarized metric graphs, establishing Conjecture 1.4.2 from \cite{zhgs}.

\section{Proof of Theorem \ref{main}} \label{proofmain}

As in Theorem \ref{main}, let $\Xbar_0$ be a complex hyperelliptic stable curve of arithmetic genus $h \geq 2$, with canonically polarized dual graph $(G,q)$. Let $0 \in U \subset \cc^{2h-1}$ be the universal deformation space of $\bar{X}_0$ as an analytic hyperelliptic stable curve, and let $\pi \colon \Xbar \to U$ be the associated Kuranishi family where $\bar{X}_0$ is the fiber of $\pi$ at $0$. Assume that the locus $D$ of points $u \in U$ such that $\Xbar_u$ is not smooth is given by the equation $u_1\cdots u_r$, in particular $|E(G)|=r$. We consider a holomorphic arc $f \colon \Delta \to U$, with $f(0)=0$, and with image not contained in $D$. We denote by $\varphi(G,q) \in \qq(x_1,\ldots,x_r)$ Zhang's graph invariant of $(G,q)$. 

Let $m_i = \ord_0 f^*u_i$ for $i=1,\ldots,r$. Then as in Section \ref{prelims} we let $(G,q,m_1,\ldots,m_r)$ denote the polarized metric graph associated to $(G,q)$ and the weight $(m_1,\ldots,m_r) \in \rr_{>0}^r$.
Let $\Ybar \to \Delta$ be the stable curve over $\Delta$ obtained by pulling back, in the category of analytic spaces, the stable curve $\Xbar \to U$ along the holomorphic arc $f \colon \Delta \to U$. Then in a neighborhood of a node $e$ of $\Xbar_0$, the surface $\Ybar$ is given by an equation $uv-t^{m(e)}$ and hence a local minimal resolution of singularities of $\Ybar$ at $e$ is obtained by replacing $e$ by a chain of $m(e)-1$ projective lines. Let $\Ybar' \to \Ybar$ be the minimal resolution of singularities of $\Ybar$. Then $\Ybar' \to \Delta$ is a semistable curve with smooth total space $\Ybar'$, and by construction of $\Ybar'$ the polarized metric graph obtained by taking the dual graph $(G',q')$ of the fiber of $\Ybar' \to \Delta$ over $0$ and giving each edge of $G'$ unit length is canonically isometric to the polarized metric graph $(G,q,m_1,\ldots,m_r)$.

With this in mind, Theorem \ref{main} reduces to the following statement.
\begin{thm} \label{reduced} Let $\Xbar$ be a smooth complex surface and let $\pi \colon \Xbar \to \Delta$ be a hyperelliptic semistable curve, smooth over $\Delta^*$. Let $(G,q)$ be the polarized dual graph of the special fiber $\Xbar_0$, and let $(\Gamma,q)$ be the associated polarized metric graph where each edge has unit length. Then the Zhang-Kawazumi invariant of the fibers $\Xbar_t$ satisfies the asymptotics
\[ \varphi(\Xbar_t) \sim -\varphi(\Gamma,q)\log|t| \]
as $t \to 0$.
\end{thm}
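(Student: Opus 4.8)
The plan is to reduce the asymptotics of $\varphi(\Xbar_t)$ to a combination of two more accessible asymptotic statements — one for the self-intersection $(\omar,\omar)$ of the Arakelov dualizing sheaf, and one for the Arakelov-theoretic $\varepsilon$-invariant — and then to invoke a known identity relating $\varphi$, $\varepsilon$ and $(\omar,\omar)$, both in the archimedean and in the graph-theoretic setting. Concretely, recall from Zhang's theory that on a compact Riemann surface of genus $h\geq 2$ one has a relation of the shape $\varphi(\Xbar_t) = a_h\,(\omar,\omar)_t + b_h\,\varepsilon_{\Ar}(\Xbar_t) + (\text{explicit elementary terms})$ with $a_h,b_h\in\qq$ depending only on $h$, coming from \cite{zhgs}; and on the graph side there is the parallel identity, for hyperelliptic polarized graphs, between $\varphi(\Gamma,q)$ and $\varepsilon(\Gamma,q)$ that the excerpt explicitly advertises as the result from \cite{ya} we will use. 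So the first step is to write down both identities carefully and check that the numerical constants match, so that proving the theorem for $\varphi$ is equivalent to proving it for $\varepsilon$ (the elementary terms, which in the hyperelliptic case are essentially the discriminant, contribute a clean $\log|t|$ with the correct combinatorial coefficient by an explicit computation on the Weierstrass points).

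Second, I would establish the asymptotics of the Arakelov self-intersection: $(\omar,\omar)_t \sim c\log|t|$ as $t\to 0$, where $c$ is read off from the dual graph. This is standard: by the arithmetic adjunction / Noether-type decomposition, $(\omar,\omar)$ for the fiber differs from the self-intersection of the relative dualizing sheaf on the (semistable, smooth-total-space) family $\Ybar'\to\Delta$ by an admissible correction term that, fiberwise near $t=0$, is governed by the Green's function $g_\mu$ on the polarized metric graph $(\Gamma,q)$; the divergent part is linear in $-\log|t|$ with coefficient the corresponding graph invariant (this is exactly the content of the results relating admissible pairing and $\varphi(\Gamma,q)$, $\varepsilon(\Gamma,q)$ in \cite{zh}, \cite{zhgs}). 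In the hyperelliptic case one can also argue more concretely via the Horikawa / hyperelliptic normal form $y^2 = \prod (x-a_i(t))$: as $t\to 0$ the branch points collide in a pattern dictated by $(G,q)$, and one can compute the leading $\log|t|$ behaviour of the period matrix, hence of the relevant Faltings-type invariants, directly.

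Third, I would do the same for $\varepsilon_{\Ar}(\Xbar_t)$, i.e. show $\varepsilon_{\Ar}(\Xbar_t)\sim -\varepsilon(\Gamma,q)\log|t|$. The mechanism is the same as for $(\omar,\omar)$: $\varepsilon_{\Ar}$ is, up to bounded terms, an integral of $g_{\Ar}$ against the canonical divisor and the Arakelov measure, and on a degenerating family the Arakelov measure and Green's function converge, after the $-\log|t|$ rescaling, to their graph analogues $\mu(\Gamma,q)$ and $g_\mu$ on the metric graph; this convergence is uniform enough (away from the nodes, and with controlled behaviour near them) that one may pass to the limit under the integral. The hyperellipticity is what makes this tractable: the hyperelliptic involution descends to the family and to the metric graph, so all the relevant objects are pulled back from $\PP^1$-bundles, and the analysis of the degeneration near the nodes becomes a one-dimensional computation. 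Combining the three steps with the matching of constants from step one yields the claimed asymptotics for $\varphi(\Xbar_t)$.

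The main obstacle, I expect, is step three — specifically, justifying that the rescaled Arakelov-Green's function $-(\log|t|)^{-1} g_{\Ar,t}$ converges to $g_\mu$ uniformly enough on $\Xbar_t$ (and with adequate control in the shrinking annular neighborhoods of the nodes, where $g_{\Ar}$ has its unbounded behaviour) to let us commute the limit $t\to 0$ with the integration defining $\varepsilon_{\Ar}$. This is a delicate analytic estimate: one has to combine the description of $g_{\Ar}$ near a degenerating node in terms of the local $uv=t^{m}$ model with the global comparison between the Arakelov measure and the admissible metric, and the hyperelliptic structure is precisely the extra input that reduces this to manageable $\PP^1$-level computations and explicit theta-function estimates along the lines of the genus-two analysis recalled in Section~\ref{reviewgenustwo}.
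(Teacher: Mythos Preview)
Your Step~1 rests on a relation that does not exist in the form you state. The quantities $(\omar,\omar)_t$ and $\varepsilon_{\Ar}(\Xbar_t)$ are not invariants of a single Riemann surface: the Arakelov self-intersection of $\omega$ is a global intersection number attached to an arithmetic surface (or to the whole family), not a function on $\Delta^*$, and there is no archimedean $\varepsilon$-invariant, since the defining integral $\int g_\mu(x,x)\,(\cdots)$ diverges on a Riemann surface. Zhang's identities in \cite{zhgs} relating $\varphi$, $\varepsilon$ and $(\omega,\omega)$ are global formulae over a number field, with $\varphi(\Sigma)$ appearing as the archimedean local term and $\varphi(G,q)$, $\varepsilon(G,q)$ as non-archimedean local terms; they do not yield a fiberwise decomposition of $\varphi(\Xbar_t)$ of the shape $a_h(\omar,\omar)_t+b_h\,\varepsilon_{\Ar}(\Xbar_t)+\cdots$. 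So the reduction you propose in Step~1 is not available, and Steps~2--3 then have no clear target.

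The paper avoids this by expressing $\varphi$ in terms of two invariants that \emph{are} genuine functions on $\ii_h$ and whose asymptotics are already known: the Faltings delta-invariant $\delta_F$ and the Petersson norm $\|\Delta_h\|$ of the hyperelliptic discriminant. The identity
\[
(2h-2)\varphi(\Sigma) = -8(2h+1)h\log(2\pi) - 3(h/n)\log\|\Delta_h\|(\Sigma) - (2h+1)\delta_F(\Sigma)
\]
from \cite{djsecond} replaces your Step~1. The asymptotics of $\log\|\Delta_h\|$ follow from Kausz's computation of $\ord_0(\Lambda)$ \cite{ka}, and those of $\delta_F$ from \cite{djdelta}; the two $\log\det\Im\Omega(t)$ contributions cancel, leaving $(2h-2)\varphi(\Xbar_t)\sim -(3d-(2h+1)(\delta+\vareps))\log|t|$. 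The final step is then the purely combinatorial identity $(2h-2)\varphi(\Gamma,q)=3d-(2h+1)(\delta+\vareps)$, which is exactly Yamaki's result \cite{ya} that you alluded to. In short, the Yamaki input you mention is the right endpoint, but the analytic bridge to it is via $\delta_F$ and $\|\Delta_h\|$, not via a nonexistent archimedean $(\omar,\omar)$--$\varepsilon$ decomposition; your Step~3 (uniform convergence of rescaled Green's functions) would be an alternative route, but it is a substantial theorem in its own right and is neither sketched here nor needed.
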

The remainder of this paper is devoted to a proof of Theorem \ref{reduced}. The proof consists of a number of steps. First we write the hyperelliptic Zhang-Kawazumi invariant in terms of the Faltings delta-invariant \cite{fa} and the Petersson norm of the modular discriminant \cite{lock}. This step was already accomplished in \cite{djsecond}. We then focus on the asymptotics of the Faltings delta-invariant and the Petersson norm of the modular discriminant. In \cite{djdelta} we have found the asymptotics of the Faltings delta-invariant in an arbitrary semistable family of curves over the unit disc. Next, a result of  I. Kausz \cite{ka} can be interpreted as giving the asymptotics of the Petersson norm of the modular discriminant. In our final step we relate the resulting asymptotics for $\varphi$ to Zhang's graph invariant via a result from \cite{ya}.

When $\pi \colon \Xbar \to \Delta$ is a semistable curve over the unit disc, we denote by $\omega$ the relative dualizing sheaf of $\pi$, and by $\lambda_1$ the determinant of the Hodge bundle $\lambda_1=\det R\pi_*\omega$ on $\Delta$.

We start by reviewing from \cite[Section ~3]{lock} the construction of a canonical discriminant modular form $\Delta_h$ on the moduli space $\ii_h$ of complex hyperelliptic curves of genus $h$. The form $\Delta_h$ generalizes the usual discriminant $\Delta$ of weight~$12$ occurring in the theory of moduli of elliptic curves. It is well known that for a semistable family of elliptic curves $\Xbar \to \Delta$, with $\Xbar$ smooth, the section
\[   \Lambda = (2\pi)^{12} \Delta(\tau)   (\d z)^{\otimes 12} \]
of the line bundle $\lambda_1^{\otimes 12}$ is trivializing over $\Delta^*$, and acquires a zero of multiplicity $\delta$ at the origin, where $\delta$ is the number of singularities of the special fiber $\Xbar_0$. These facts generalize to semistable families of hyperelliptic curves of genus $h$ over the unit disc as follows.

Let $n$ be the binomial coefficient $\binom{2h}{h+1}$. Let $\HH_h$ be Siegel's upper half-space consisting of symmetric complex $h \times h$-matrices with positive definite imaginary part. Then for $z$ in  $\cc^h$ (viewed as a column vector), matrices $\Omega \in \HH_h$ and $\eta',\eta''$ in $\frac{1}{2} \zz^h$ one has the classical theta function with characteristic
$\eta=\genfrac[]{0pt}{0}{\eta'}{\eta''}$ given by the Fourier series 
\[ \theta[\eta](z,\Omega) = \sum_{n \in \zz^h} \exp( \pi i  {}^t (n+\eta')
\Omega  (n+\eta') + 2\pi i  {}^t (n+\eta') (z+\eta'')) \, . \] 
For a given set $\tt$ of even theta characteristics in $\frac{1}{2}\zz^h$ we let $S(\tt)$ be the set of matrices $\Omega \in \HH_h$ such that the equivalence
\[ \vartheta[\eta](0,\Omega)\neq 0  \quad \Longleftrightarrow \quad \eta \in \tt \]
holds for $\Omega$. Following \cite[Theorem III.9.1]{mu} there exists a canonical set $\tt_0$ of even theta characteristics such that $S(\tt_0)$ is precisely the set of hyperelliptic period matrices. Here and in what follows, by a hyperelliptic period matrix we mean a normalized period matrix of a hyperelliptic Riemann surface $\Sigma$, formed on a canonical symplectic basis of $H_1(\Sigma,\zz)$. We refer to \cite[Section III.5]{mu} for the notion of a canonical symplectic basis, determined by an ordering of the hyperelliptic branch points of $\Sigma$.  

The discriminant modular form $\Delta_h$ is defined to be the function
\[ \Delta_h(\Omega) = 2^{-(4h+4)n} \prod_{\eta \in \tt_0}
\theta[\eta](0,\Omega)^8   \] on $\HH_h$. A verification shows that
$\Delta_h$ is a modular form on the congruence normal subgroup $\Gamma_h(2) \subset \mathrm{Sp}(2h,\zz)$ of weight $(8h+4)n/h$.
If $\Omega \in \HH_h$ is a hyperelliptic period matrix, we put
\begin{equation} \label{defnorm}
\|\Delta_h\|(\Omega) = (\det \mathrm{Im} \, \Omega)^{(4h+2)n/h}|\Delta_h(\Omega)| \, .
\end{equation}
Then for a given hyperelliptic curve $\Sigma$ of genus $h$ the value of $\|\Delta_h\|$ on a period matrix $\Omega$ of $\Sigma$ on a canonical basis of homology does not depend on the choice of $\Omega$.  We conclude that $\|\Delta_h\|$ induces a well-defined real-valued function on $\ii_h$. In \cite{djsecond} we found that the restriction of $\varphi$ to $\ii_h$ can be expressed in terms of $\|\Delta_h\|$ and the Faltings delta-invariant \cite{fa}.

\begin{thm} \label{ZKforhyp}
The Zhang-Kawazumi invariant of a hyperelliptic Riemann surface $\Sigma$ with hyperelliptic period matrix $\Omega$ satisfies the following formula:
\[ (2h-2) \varphi(\Sigma) =  -8(2h+1)h \log(2\pi) - 3(h/n) \log\|\Delta_h\|(\Sigma) - (2h+1)\delta_F(\Sigma) \, . \]
Here $\delta_F(\Sigma)$ is the Faltings delta-invariant of $\Sigma$.
\end{thm}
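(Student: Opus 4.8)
The plan is to prove the formula by combining a relation that holds on all of $\mm_h$ with an explicit evaluation on the hyperelliptic locus $\ii_h$. For the first part I would invoke Zhang's archimedean $\lambda$-invariant $\lambda(\Sigma)$, which by construction appears -- alongside $\varphi$ and $\delta_F$ -- in the place-by-place decomposition of the canonical Gross-Schoen height and of the self-intersection of the admissible relative dualizing sheaf in \cite{zhgs}; isolating the archimedean term there yields an identity of the shape
\[
(2h-2)\,\varphi(\Sigma) \;=\; A_h\,\lambda(\Sigma) \;-\; (2h+1)\,\delta_F(\Sigma) \;+\; B_h\log(2\pi),
\]
valid for every compact Riemann surface of genus $h \geq 1$, with explicit constants $A_h > 0$ and $B_h$ (in genus one, where $\varphi$ vanishes, this degenerates to the classical expression of $\delta_F$ in terms of the modular discriminant). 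Note that the Faltings term already carries the coefficient $-(2h+1)$ demanded by the statement, so the whole content is reduced to the evaluation
\[
\lambda(\Sigma) \;=\; -\,\frac{h}{4\,n\,(2h+1)}\,\log\|\Delta_h\|(\Sigma) \;+\; C_h\log(2\pi) \qquad (\Sigma \in \ii_h)
\]
for an explicit $C_h$, where $\|\Delta_h\|$ is Lockhart's discriminant modular form \cite{lock}.

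There are two natural routes to this evaluation, and I would use them in tandem to control constants. The first is direct: unwind $\lambda$ into the Arakelov-theoretic data it is built from -- the Arakelov metric on $\omega_\Sigma$, the values of the Arakelov-Green's function at the Weierstrass points, and the Arakelov norm of a Wronskian of a basis of $H^0(\Sigma,\omega_\Sigma)$ -- and specialise to hyperelliptic $\Sigma$, where each of these has a closed form in terms of theta constants. The Wronskian divisor is supported at the $2h+2$ branch points with known multiplicities; Thomae's formula (cf.\ \cite[Section~III]{mu}, \cite{lock}) expresses the even theta constants, hence $\Delta_h$, through the branch points, which identifies the Wronskian norm with a power of $\|\Delta_h\|$ up to a computable archimedean factor; and the Arakelov-Green's function between Weierstrass points on a hyperelliptic curve likewise has a theta-constant expression. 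Assembling these pieces -- the computation carried out in \cite{djsecond} -- produces the displayed value of $\lambda$ on $\ii_h$, and substituting it back and simplifying with $n = \binom{2h}{h+1}$ gives the asserted formula.

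The second route is by curvature. The Levi forms of $\varphi$ and of $\delta_F$ on $\mm_h$ are known (cf.\ \cite{kaw}, \cite{djtorus} for $\varphi$, and the Noether formula together with Mumford's isomorphism for $\delta_F$), and $\log\|\Delta_h\|$ -- being minus the logarithm of a Petersson norm of a Siegel modular form pulled back along the Torelli map -- has Levi form a constant multiple of the Hodge form $\ceeone(\lambda_1,\|\cdot\|_{\mathrm{Fal}})$. One checks that, after restriction to the connected locus $\ii_h$, the Levi form of $(2h-2)\varphi + (2h+1)\delta_F$ agrees up to a scalar with that of $-3(h/n)\log\|\Delta_h\|$ -- in particular the Weil-Petersson contribution drops out on $\ii_h$ -- so the difference of the two sides of the asserted identity is pluriharmonic on $\ii_h$. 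This difference is moreover bounded towards the boundary (the logarithmic divergences of the three terms cancel, as one reads off from \cite{ka} and the degeneration asymptotics of $\varphi$ and $\delta_F$), hence extends to the projective variety $\overline{\ii}_h$ and is constant; the value of the constant, equivalently the coefficient of $\log(2\pi)$, is pinned down by one explicit degeneration, for instance inside a genus-two family where the subleading asymptotics of $\delta_F$, $\log\|\Delta_2\|$ and $\varphi$ are all known (cf.\ Section~\ref{reviewgenustwo}).

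The genuine difficulty is the numerology of the main step: matching Lockhart's normalisation of $\Delta_h$ -- the powers of $2$ in its definition, and its weight $(8h+4)n/h$ -- against the normalisation of the Arakelov-Green's function and the precise form of Zhang's $\lambda$, so that the three contributions combine into exactly the coefficients $-8(2h+1)h$, $-3(h/n)$ and $-(2h+1)$ of the statement. The curvature argument makes the proportionality on $\ii_h$ transparent but does not by itself deliver these numbers; they require either the Thomae-type computation above, or a base case clean enough to propagate to all genera.
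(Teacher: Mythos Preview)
The paper's own proof is a one-line citation: ``This is \cite[Corollary 1.8]{djsecond}.'' Your first route is precisely an outline of how that corollary is obtained in \cite{djsecond}: one starts from Zhang's archimedean $\lambda$-invariant and the universal linear relation among $\lambda$, $\varphi$, and $\delta_F$ coming from \cite{zhgs}, and then evaluates $\lambda$ on the hyperelliptic locus via the Wronskian/Thomae computation that identifies it with a power of $\|\Delta_h\|$. Since you explicitly invoke \cite{djsecond} for that step, your first route and the paper's proof coincide.

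Your second route, by contrast, is a genuinely different strategy not used in the paper. The curvature matching on $\ii_h$ is correct in spirit: $\partial\bar\partial$ of each of $\varphi$, $\delta_F$, and $\log\|\Delta_h\|$ is a known combination of the Hodge and Weil--Petersson forms, and one can check the claimed cancellation. Two caveats, however. First, within the logic of the present paper the boundedness step is delicate: the asymptotics of $\varphi$ in hyperelliptic families are \emph{deduced from} Theorem~\ref{ZKforhyp}, so you must appeal to an independent source; \cite{djasympt} supplies this only along one-parameter degenerations ($r=1$), which does suffice to extend a pluriharmonic function across the generic boundary divisor, after which a Hartogs-type argument handles the deeper strata --- but you should make that reduction explicit, and be mindful that $\overline{\ii}_h$ is not smooth. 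Second, your proposal to pin down the additive constant via a genus-two degeneration determines the $\log(2\pi)$ coefficient only for $h=2$; for general $h$ you still need one explicit hyperelliptic evaluation in each genus (or an argument that the constant's dependence on $h$ is forced), which in practice brings you back to the Thomae/Wronskian computation of the first route. What the curvature argument buys is a clean conceptual explanation of why the identity \emph{must} be linear in $\log\|\Delta_h\|$ on $\ii_h$; what it does not buy is the numerology, exactly as you acknowledge.
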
  
\begin{proof} This is \cite[Corollary 1.8]{djsecond}.
\end{proof}
Let $\pi \colon \Xbar \to \Delta$ be a hyperelliptic semistable curve, with $\Xbar$ smooth over $\cc$, and with $\pi$ smooth over $\Delta^*$. 
Following the paper \cite{ka} by I. Kausz, the algebraic discriminant of a generic hyperelliptic equation for $\Xbar \to \Delta$ gives rise to a canonical global section $\Lambda$ of $\lambda_1^{8h+4}$ over $\Delta$ which is trivializing over $\Delta^*$. In holomorphic coordinates on the associated jacobian family we may write (cf. the proof of \cite[Theorem~8.2]{djmumford})
\begin{equation} \label{formulaLambda} \Lambda^{\otimes n} = (2\pi)^{(8h+4)hn} \Delta_h(\Omega)^{\otimes h} (\d z_1 \wedge \ldots \wedge \d z_h)^{\otimes (8h+4)n} \, .
\end{equation}
We say that a double point $x$ of the special fiber $\Xbar_0$ is of
type $0$ if the local normalization of $\Xbar_0$ at $x$ is connected. We
say that $x$ is
of type $i$, where $i=1,\ldots,[h/2]$, if the local normalization of
$\Xbar_0$ at $x$ is the disjoint union of a curve of  genus
$i$ and a curve of genus $h-i$. Let $\iota$ be the
involution on $\Xbar_0$ induced by the hyperelliptic involution on $\Xbar$.
Let $x$ be a double point of type $0$ on $\Xbar_0$. If $x$ is not fixed by $\iota$, the
local normalization of $\Xbar_0$ at $\{x,\iota(x)\}$ consists of two
connected components, of genus $j$ and $h-j-1$, say, where $0\leq j \leq
[(h-1)/2]$. In this case we say that
the pair $\{x,\iota(x)\}$ is of subtype $j$. Let $\xi'_0$ be the number of double points of type $0$ fixed by $\iota$, let $\xi_j$ for $j=0,\ldots,[(h-1)/2]$ be the number of
pairs $\{x,\iota(x)\}$ of double points of subtype $j$, and let $\delta_i$ for $i=1,\ldots,[h/2]$
be the number of double points of type $i$. Let the integer $d$ be given by
\begin{equation} \label{definitiond} d = h\xi'_0 + \sum_{j=0}^{[(h-1)/2]} 2(j+1)(h-j)\xi_j + \sum_{i=1}^{[h/2]} 4i(h-i)\delta_i \, .  
\end{equation}
\begin{thm} \label{cornalba}  Let $\Omega(t)$ be a family of hyperelliptic period matrices associated to $\Xbar \to \Delta$. Then the asymptotic formula
\begin{equation} \label{asymptDelta}
-(h/n)\log \|\Delta_h\|(\Xbar_t) \sim -d \log|t| - (4h+2)\log \det \Im \Omega(t)  
\end{equation}
holds as $t \to 0$.
\end{thm}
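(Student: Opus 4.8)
The plan is to recast both sides of \eqref{asymptDelta} in terms of the $L^2$ (Hodge) metric on the Hodge bundle $\lambda_1=\det R\pi_*\omega$, so that the input from \cite{ka} enters only through an order of vanishing, while the term $(4h+2)\log\det\Im\Omega(t)$ on the right absorbs the (mild, logarithmic) growth of that metric. Let $\|\cdot\|_{L^2}$ denote the $L^2$-metric on $\lambda_1$ and on its tensor powers. The Riemann bilinear relations give $\|\d z_1\wedge\cdots\wedge\d z_h\|_{L^2}^2=\det\Im\Omega$, hence $\|(\d z_1\wedge\cdots\wedge\d z_h)^{\otimes(8h+4)n}\|_{L^2}=(\det\Im\Omega)^{(4h+2)n}$. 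Taking $L^2$-norms on both sides of \eqref{formulaLambda} and comparing with the definition \eqref{defnorm} of $\|\Delta_h\|$, one obtains after a short computation the exact identity
\[
-(h/n)\log\|\Delta_h\|(\Xbar_t)=-\log\|\Lambda\|_{L^2}(t)+(8h+4)h\log(2\pi)
\]
for $t\in\Delta^*$; the additive constant is irrelevant to the relation $\sim$, so it suffices to prove $\log\|\Lambda\|_{L^2}(t)\sim d\log|t|+(4h+2)\log\det\Im\Omega(t)$.

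The second step is the input from Kausz. The section $\Lambda$ of $\lambda_1^{8h+4}$ is the one attached to the discriminant of a generic hyperelliptic equation for $\Xbar\to\Delta$; it extends holomorphically over $\Delta$ and trivializes over $\Delta^*$. Kausz's local computation of the contribution of a fibre to the divisor of this discriminant, read off in terms of the action of the involution $\iota$ on the nodes of $\Xbar_0$ and their types, says precisely that $\Lambda$ vanishes at the origin to order $d$, with $d$ the integer of \eqref{definitiond} (the three sums recording, respectively, the $\iota$-fixed type-$0$ nodes, the $\iota$-swapped pairs of type-$0$ nodes of each subtype $j$, and the nodes of type $i\geq 1$). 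Writing $\Lambda=t^{d}\Lambda'$ with $\Lambda'$ a holomorphic frame of $\lambda_1^{8h+4}$ near $0$, and choosing a holomorphic frame $e$ of $\lambda_1$ near $0$ so that $\Lambda'=g\cdot e^{\otimes(8h+4)}$ with $g$ a nowhere vanishing holomorphic function, we get
\[
\log\|\Lambda\|_{L^2}(t)=d\log|t|+(8h+4)\log\|e\|_{L^2}(t)+\log|g(t)|.
\]
Since $\log|g(t)|$ is continuous at $0$, what remains to be shown is that $2\log\|e\|_{L^2}(t)-\log\det\Im\Omega(t)$ extends continuously over $\Delta$.

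This last point is a statement in asymptotic Hodge theory. Because $\pi$ has semistable reduction, the monodromy is unipotent, and on a symplectic basis adapted to the weight filtration it is block upper triangular with unipotent diagonal blocks; consequently the normalised period frame $\sigma:=\d z_1\wedge\cdots\wedge\d z_h$ is a single-valued holomorphic section of $\lambda_1$ over $\Delta^*$. Write $\sigma=\psi\cdot e$. The nilpotent orbit theorem gives the two-sided estimates $\det\Im\Omega(t)\asymp(-\log|t|)^{b_1(G)}$ and $\|e\|_{L^2}^2(t)\asymp(-\log|t|)^{b_1(G)}$, where $b_1(G)$ is the first Betti number of the dual graph (equal to the rank of the monodromy logarithm on the period matrices, and to the dimension of the weight-two piece of the limiting mixed Hodge structure). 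Since $\|\sigma\|_{L^2}^2=\det\Im\Omega$, the function $|\psi(t)|^2=\det\Im\Omega(t)/\|e\|_{L^2}^2(t)$ is bounded above and bounded away from $0$ near the origin; by the Riemann removable singularity theorem $\psi$ extends holomorphically and without zeros to $0$, whence $2\log\|e\|_{L^2}(t)-\log\det\Im\Omega(t)=-2\log|\psi(t)|$ extends continuously (indeed real-analytically) over $\Delta$. Assembling the three steps yields \eqref{asymptDelta}. The asymptotics of the Hodge metric used here are of the type worked out in \cite{djdelta}, which could also be quoted directly.

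The main obstacle is the second step: correctly interpreting Kausz's result, i.e.\ identifying the contribution of a semistable hyperelliptic fibre to the divisor of the discriminant section $\Lambda$ with the explicit combinatorial quantity $d$ of \eqref{definitiond}, where the novelty compared with the classical relation between $\lambda_1$ and the boundary classes on $\mm_h$ is that the hyperelliptic discriminant distinguishes $\iota$-fixed from $\iota$-swapped nodes and weights the latter according to their subtype. The third step is routine given the nilpotent orbit theorem, and the first is bookkeeping with \eqref{formulaLambda} and \eqref{defnorm}.
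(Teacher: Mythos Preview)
Your argument is correct and follows the same route as the paper, only with considerably more detail. The paper's proof is three lines: Kausz gives $\ord_0(\Lambda)=d$; combining with \eqref{formulaLambda} yields $\ord_0(\Delta_h)=dn/h$; then \eqref{defnorm} gives \eqref{asymptDelta} directly. The passage from $\ord_0(\Lambda)=d$ to $\ord_0(\Delta_h)=dn/h$ implicitly uses that the normalised period frame $\sigma=\d z_1\wedge\cdots\wedge\d z_h$ extends as a nowhere-vanishing section of $\lambda_1=\det R\pi_*\omega$ across $0$, and this is precisely what your third step establishes (via the $L^2$ metric and the nilpotent orbit estimates, or equivalently by identifying the Deligne extension with $\det R\pi_*\omega$). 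Your recasting through $\|\Lambda\|_{L^2}$ is just a repackaging of the same computation: once one knows $\sigma$ trivialises $\lambda_1$, your identity $-(h/n)\log\|\Delta_h\|=-\log\|\Lambda\|_{L^2}+\text{const}$ together with $\|\sigma\|_{L^2}^2=\det\Im\Omega$ reduces immediately to the paper's one-line deduction from \eqref{defnorm}.
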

\begin{proof} 
By \cite[Theorem 3.1]{ka} we have, since $\Xbar$ is smooth, the identity $ \ord_0(\Lambda) = d$.
Combining with (\ref{formulaLambda}) we obtain the identity $\ord_0(\Delta_h) = dn/h$. Formula (\ref{asymptDelta}) then follows from (\ref{defnorm}).
\end{proof}
\begin{remark} The identity $\ord_0(\Lambda) = d$ is a refined version of an identity in $\mathrm{Pic}_\qq(\bar{\mathcal{I}}_h)$ due to M. Cornalba and J. Harris \cite[Proposition~4.7]{ch}. Here $\bar{\mathcal{I}}_h$ denotes the stack theoretic closure of $\mathcal{I}_h$ inside $\overline{\mm}_h$.
\end{remark}
Let $(G,q)$ be the dual graph of the special fiber $\Xbar_0$, and let $(\Gamma,q)$ be the associated polarized metric graph where each edge has unit length.
\begin{thm} \label{delta} 
Let $\delta$ be the total volume of $(\Gamma,q)$, and let $\vareps$ be Zhang's epsilon-invariant (\ref{defeps}) of $(\Gamma,q)$. Let $\Omega(t)$ be any family of normalized period matrices associated to $\Xbar \to \Delta$. Then the Faltings delta-invariant satisfies the asymptotics
\begin{equation} \label{asymptdelta} \delta_F(\Xbar_t) \sim -(\delta+\vareps) \log|t| - 6 \log \det \Im \Omega(t) 
\end{equation}
as $t \to 0$. 
\end{thm}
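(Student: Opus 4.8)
The plan is to invoke the main result of \cite{djdelta} on the asymptotics of the Faltings delta-invariant in an arbitrary semistable family of curves over the unit disc, and then identify the combinatorial quantities appearing there with $\delta$ and $\vareps$ as defined via Zhang's admissible measure on the metric graph $(\Gamma,q)$. Concretely, the expected input from \cite{djdelta} is an expansion of the shape $\delta_F(\Xbar_t) = c \log|t| - 6\log\det\Im\Omega(t) + O(1)$, where the constant $c$ is read off from the dual graph $(G,q)$ of $\Xbar_0$ together with the assignment of unit lengths to its edges; the $-6\log\det\Im\Omega$ term is the familiar period-matrix contribution (reflecting that $\delta_F$ differs from a smooth metric invariant by this multiple of the Hodge norm). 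The remaining task is purely graph-theoretic: to show $c = -(\delta+\vareps)$, i.e.\ that the combinatorial constant produced by the analytic degeneration argument coincides with the sum of the total edge-length $\delta(\Gamma)$ and Zhang's $\vareps(\Gamma,q)$ from \eqref{defeps}.

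First I would recall precisely the statement of the theorem from \cite{djdelta}, pinning down how its boundary constant is expressed — typically as a sum over the edges of $G$ of a local contribution plus a global term built from the admissible Arakelov–Green's function $g_\mu$ on $\Gamma$, possibly already packaged in terms of $\vareps(\Gamma,q)$ or of the self-intersection of the admissible dualizing sheaf. Next I would match notation: the ``$\delta$'' in the present statement is the total volume $\delta(\Gamma)$, which in the unit-length normalization is simply $|E(G)|=r$; and $\vareps$ is defined by \eqref{defeps} with the admissible measure $\mu=\mu(\Gamma,q)=\frac{1}{2h}(\delta_{K_q}+2\mu_{\can})$. If \cite{djdelta} already states its result in terms of $\vareps$ this is immediate; otherwise one unwinds the definitions of $\mu_{\can}$, $r(e)$, and $g_\mu$ and checks the identity term by term over the edge set, using the standard relations between effective resistances, the canonical current, and Zhang's invariants. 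Finally I would note that the normalization of the period matrix $\Omega(t)$ is harmless: any two choices differ by a bounded holomorphic change, so $\log\det\Im\Omega(t)$ is well-defined up to an additive $O(1)$, consistent with the $\sim$ relation, which is exactly the tolerance we need.

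The main obstacle is the bookkeeping in the second step: reconciling the precise form of the boundary constant as it emerges from the analytic/arithmetic machinery of \cite{djdelta} — which may involve the admissible pairing $(\omega_a,\omega_a)$ or a sum of local invariants attached to the singular points of $\Xbar_0$ — with the clean expression $\delta(\Gamma)+\vareps(\Gamma,q)$ in terms of Zhang's admissible measure. This is not conceptually deep but requires care, since the two sides are indexed differently (components and nodes of $\Xbar_0$ versus vertices and edges of $\Gamma$ with their resistances), and one must make sure that type-$0$ versus type-$i$ nodes, loops versus bridges (where $r(e)=\infty$), and the polarization $q$ all enter with the correct coefficients. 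Once that identification is secured, combining it with the period-matrix term gives \eqref{asymptdelta} directly, and no further analytic work is needed here — the delicate asymptotic analysis having been done already in \cite{djdelta}.
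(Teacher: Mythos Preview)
Your approach is correct and matches the paper's: the proof is simply a citation of \cite[Theorem~1.1]{djdelta}, which already states the asymptotic in the form \eqref{asymptdelta} with the boundary constant packaged as $-(\delta+\vareps)$. The bookkeeping you anticipate in the second step is therefore unnecessary---no unwinding of $\mu_{\can}$, $r(e)$, or node types is required, and the whole proof is one sentence.
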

\begin{proof} This is a special case of \cite[Theorem~1.1]{djdelta}.
\end{proof}
\begin{proof}[Proof of Theorem \ref{reduced}]
Upon combining Theorems \ref{ZKforhyp}, \ref{cornalba} and \ref{delta} we obtain the asymptotic estimate
\[ (2h-2)\varphi(\Xbar_t) \sim -(3d-(2h+1)(\delta + \vareps)) \log|t|  \]
as $t \to 0$. Hence we are done once we show that the equality
\begin{equation} \label{reducedbis} (2h-2)\varphi  = 3d - (2h+1)(\delta+\vareps) 
\end{equation}
holds, where $\varphi=\varphi(\Gamma,q)$ is Zhang's graph invariant of $(\Gamma,q)$.  In \cite[Section 1.7]{ya} the invariant 
\[ \psi  = \vareps + \frac{2h-2}{2h+1}\varphi \]
is considered. With this notation formula (\ref{reducedbis}) is equivalent to the formula
\begin{equation} \label{tobeproven} (2h+1)\psi = 3d - (2h+1)\delta \, . \end{equation}
A combination of \cite[Section 1.9]{ya} and \cite[Theorem 3.5]{ya} yields that for the hyperelliptic polarized graph $(\Gamma,q)$ the equality
\begin{equation} \label{psiexplicit}  (2h+1)\psi   = (h-1)\delta_0 + \sum_{j=1}^{[(h-1)/2]} 6j(h-1-j)\xi_j 
  + \sum_{i=1}^{[h/2]} (12i(h-i)-(2h+1))\delta_i 
\end{equation}
holds. On the other hand, from (\ref{definitiond}) and the identities
\[ \delta = \delta_0 + \sum_{i=1}^{[h/2]} \delta_i \, , \quad \delta_0 = \xi'_0 + 2 \sum_{j=0}^{[(h-1)/2]} \xi_j \]
we obtain
\begin{equation} \label{combiexplicit}  3d - (2h+1)\delta  = (h-1)\delta_0 + \sum_{j=0}^{[(h-1)/2]} (6(j+1)(h-j)-6h) \xi_j 
  + \sum_{i=1}^{[h/2]} (12i(h-i)-(2h+1))\delta_i \, . 
\end{equation}
One readily checks that the right hand sides of (\ref{psiexplicit}) and (\ref{combiexplicit}) are equal, and (\ref{tobeproven}) follows. This finishes the proof of Theorem \ref{reduced}.
\end{proof}

\subsection*{Acknowledgments} I would like to thank Boris Pioline for helpful discussions.

\vspace{0.5cm}

\noindent Address of the author:\\ \\
Mathematical Institute  \\
Leiden University \\
PO Box 9512  \\
2300 RA Leiden  \\
The Netherlands  \\ \\
Email: \verb+rdejong@math.leidenuniv.nl+

\end{document}